\let\@newpf\proof \let\proof\relax 
\newenvironment{pf*}[1]{\@newpf[#1]}{\qed\endtrivlist}
\newtheorem{theorem}{Theorem}[section]
\newtheorem{lemma}{Lemma}[section]
\newtheorem{definition}{Definition}[section]
\newtheorem{remark}{Remark}[section]
\numberwithin{equation}{section}
\title{Exponential decay for solutions to semilinear damped wave equation}
\author{ St\'{e}phane Gerbi\thanks{e-mail:Stephane.Gerbi@univ-savoie.fr, \textbf{corresponding
author.}} ~and Belkacem Said-Houari\thanks{Permanent adress: Laboratoire de Math\'{e}matiques Appliqu\'ees, Universit\'{e} Badji Mokhtar,
B.P. 12 Annaba 23000, Alg\'erie, e-mail:saidhouarib@yahoo.fr}\\[0.3cm]
Laboratoire de Math\'ematiques, Universit\'e de Savoie, \\
73376 Le Bourget du Lac, France
}
\date{}
\begin{document}
\maketitle

\begin{abstract}
This paper is concerned with decay estimate of solutions to the semilinear wave equation with strong damping in a bounded domain. Introducing an appropriate
 Lyapunov function, we prove that when the damping is linear, we can find initial data, for which the solution decays exponentially. This result improves an early one in \cite{GS06}.
\end{abstract}
%%%%%%%%%%%%%%%%%%%%%%%%%%%%%%%%%%
\section{Introduction}
%%%%%%%%%%%%%%%%%%%%%%%%%%%%%%%%%%
In this paper we are concerned by the following problem 
\begin{equation} \label{ondes}
\left\{
\begin{array}{ll}
u_{tt}-\Delta{u}-\omega\Delta{u}_{t}+\mu{u}_{t}=u \vert u \vert^{p-2}&  x \in \Omega ,\ t>0 \\[0.1cm]
u(x,t) =0, &  x\in \partial \Omega,\ t>0  \\[0.1cm]
u(x,0) =u_{0}(x), \; u_{t}(x,0) = u_{1}(x) & x \in \Omega \quad .
\end{array}
\right.
\end{equation}
in a bounded regular domain $\Omega\subset\mathbb{R}^{n}$. Here $p > 2$ and $\omega,\mu$ are positive constants.
%%%%%%%%%%% NEW %%%%%%%%%%%%
Only one of this constant must be strictly positive (as pointed out by Gazzola and Squassina \cite{GS06}). We will suppose that $\mu > 0$ and $\omega \geq 0$
(see Remark \ref{omega} for the case $\omega = 0$).
%%%%%%%%%%%% END NEW
The present problem has been studied by Gazzola and Squassina \cite{GS06}.
In their work, the authors proved some results on the well-posedness and investigate the asymptotic behavior 
of solutions of problem (\ref{ondes}). 
In particular, they showed the global existence and the polynomial decay property of solutions provided 
that the initial data are in the potential well, \cite[Theorem 3.8]{GS06}.
The proof in \cite{GS06} is based on a method used in \cite{IS96} and \cite{I96}. In these works, the authors obtain 
the following differential inequality:
$$
\frac{d}{dt}\Bigl[(1+t) E(t) \Bigl] \leq E(t) \quad ,
$$
where $E$ is the energy of the solution. Unfortunately they obtain a decay rate which is not optimal.

The nonlinear wave equations related to (\ref{ondes}) has been investigated by many authors \cite{B77,E04,HZ88,IS96,MS04,O97,Y02,Z90}.

In the absence of the nonlinear source term, it is well known that the presence of one damping term i.e. 
$\omega>0$ or $\mu>0$ ensures global existence and decay of solutions for arbitrary initial data (see \cite{HZ88,K89}).
For $\omega=\mu=0$, the nonlinear term $u\left\vert u\right\vert ^{p-2}$ causes finite-time blow-up of solutions
with negative initial energy (see \cite{B77,KL78}).

The interaction between the damping and the source terms was first considered by Levine  \cite{L74_1,L74_2}. 
He showed that solutions with negative initial energy blows up in finite time.
When $\omega=0$ and the linear term ${u}_{t}$ is replaced by $u_{t}\left\vert u_{t}\right\vert ^{m-2}$, 
Georgiev and Todorova \cite{GT94} extended Levine's result to the case where $m>2$.
In their work, the authors introduced a method different from the one known as the concavity method. 
They determined suitable relations between $m$ and $p$, for which there is global existence or alternatively finite time
blow-up. Precisely, they showed that the solution continues to exist globally ``in time'' if $m \ge p$ and 
blows up in finite time if $p>m$ and the initial energy is sufficiently negative.  
Vitillaro \cite{V99} extended the results in \cite{GT94} to situations where the damping is nonlinear and the solution has positive initial energy.
Similar results have been also established by Todorova \cite{T98,T99}, for different Cauchy problems.

We recall here that the potential well method introduced by Payne and Sattinger \cite{PS75} 
is also useful and widely used in the litterature to investigate the local existence, global existence and 
asymptotic behavior of the solutions to some problems related to problem (\ref{ondes}) 
(see \cite{T99,V99,E04,O97,I96,IS96,Y02}). 
Introducing a strong damping term $\Delta u_{t}$ makes the problem different from the one considered in \cite{GT94}. 
For this reason less results are, at the present time, known for the wave equation with strong damping 
and many problems remain unsolved (see \cite{GS06}).

The purpose of this paper is to obtain a better decay estimate of solutions to the problem (\ref{ondes}).
More precisely we show that we can always find initial data in the stable set for which the solution of problem (\ref{ondes}) 
decays exponentially. 
The key tool in the proof is an idea of Haraux and Zuazua \cite{HZ88} and Zuazua \cite{Z90}, which is based on the construction of a suitable 
Lyapunov function. 
%%%%%%%%NEW 
This kind of Lyapunov function, which is a small perturbation of the energy, has been recently used by Benaissa and Messaoudi \cite{BM05}
to study the exponential decay if a weakly damped semilinear wave equations.
%%%%% END NEW %%%%%%%%%%
\section{Asymptotic stability}

In this section, we introduce and prove our main result.
For this purpose let us introduce the definition of the solution of problem (\ref{ondes})
given by Gazzola and Squassina in \cite{GS06}.
\begin{definition}\label{defsol}
For $T > 0$, we denote 
\begin{equation*}Y_T = 
\left\{\begin{array}{ll}
u \in C^{0}\left( [0,T] ,H_{0}^{1}(\Omega) \right) \cap &C^{1}\left([0,T], L^{2}(\Omega) \right)
\cap C^{2}\left([0,T], H^{-1}(\Omega) \right) \\
~&u_{t}\in L^{2}\left([0,T], L^{2}(\Omega) \right)
\end{array}
\right\}
\end{equation*}
Given $u_0 \in H_{0}^{1}(\Omega)$ and $u_1 \in L^{2}(\Omega)$, a function $u \in Y_T$
is a local solution to (\ref{ondes}), if $u(0)=u_{0},\ u_{t}(0) = u_{1}$ and
\begin{equation*}
\int_{\Omega} u_{tt}\phi dx + \int_{\Omega}\nabla u\nabla \phi dx + 
\omega \int_{\Omega} \nabla u_{t} \nabla \phi dx + \mu \int_{\Omega} u_{t}\phi dx =
\int_{\Omega} \vert u \vert ^{p-2} u \phi dx,
\end{equation*}
for any function $\phi \in H_{0}^{1}(\Omega) $ and a.e. $t \in [ 0,T] \quad .$

\end{definition}
Let us first define the Sobolev critical exponent $\bar{p}$ as:
\begin{equation*}
\bar{p} = \left\{
\begin{array}{l}
\displaystyle \frac{2N}{N-2},\ \mbox{ for } \omega > 0 \mbox{ and } N \geq 3 \\[0.3cm]
\displaystyle \frac{2N-2}{N-2},\emph{\ }\mbox{ for } \omega = 0 \mbox{ and } N \geq 3
\end{array}
\quad \mbox{ and } \bar{p} = \infty ,\mbox{ if }N = 1, 2 \quad .
\right.
\end{equation*}
We first state a local existence theorem whose proof is written by Gazzola and Squassina, \cite[Theorem 3.1]{GS06}.
%%%%%%%%%%%%%%%%%%%%%%%%%%%%%%%%%%%%%%%%%%%%%%%%%%%%%%%%%%%%%%%
\begin{theorem}\label{existence}
Assume $2 < p \leq \bar{p}$. Let $u_{0}\in H_{0}^{1}(\Omega)$ and $u_{1} \in L^{2}(\Omega)$.
Then there exist $T>0$ and a unique solution of (\ref{ondes}) over $[0,T]$
in the sense of definition \ref{defsol}.
\end{theorem}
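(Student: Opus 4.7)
The plan is to prove this via a Faedo--Galerkin scheme coupled with a fixed-point argument, which is the standard route for semilinear wave equations with a nonlinearity of critical (or subcritical) growth. First I would freeze the nonlinearity: given $v$ in a suitable ball of the space
\[
X_T := C^0([0,T];H_0^1(\Omega)) \cap C^1([0,T];L^2(\Omega)),
\]
I would solve the linear non-homogeneous problem
\[
u_{tt}-\Delta u - \omega\Delta u_t + \mu u_t = v|v|^{p-2}, \qquad u(0)=u_0,\; u_t(0)=u_1,
\]
with Dirichlet boundary condition. The Sobolev embedding $H_0^1(\Omega) \hookrightarrow L^{2(p-1)}(\Omega)$, valid precisely under the assumption $2<p\le\bar p$ (with the two different values of $\bar p$ corresponding to $\omega>0$ and $\omega=0$), guarantees that $v|v|^{p-2} \in L^\infty(0,T;L^2(\Omega))$, so the linear source is admissible.

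Second, I would construct the solution of this linear problem by projecting on the eigenbasis $\{w_j\}$ of $-\Delta$ with Dirichlet data, obtaining finite-dimensional approximations $u^m$. Multiplying the Galerkin ODE by $u^m_t$ gives the natural energy identity
\[
\tfrac{1}{2}\frac{d}{dt}\Bigl(\|u^m_t\|_2^2 + \|\nabla u^m\|_2^2\Bigr) + \omega\|\nabla u^m_t\|_2^2 + \mu\|u^m_t\|_2^2 = \int_\Omega v|v|^{p-2} u^m_t\,dx,
\]
and Cauchy--Schwarz plus Gronwall yields uniform bounds in $X_T$ (and also in $L^2(0,T;H_0^1)$ for $u_t$ when $\omega>0$). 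The equation itself then controls $u_{tt}$ in $L^2(0,T;H^{-1}(\Omega))$, so by weak/weak-$*$ compactness one passes to the limit $m\to\infty$ to obtain a solution of the linear problem in $Y_T$; the standard interpolation argument recovers continuity in time.

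Third, this construction defines a map $\Phi:v\mapsto u$ on a closed ball of $X_T$ centered at the solution of the linear homogeneous problem. The contraction estimate is the step I expect to be the main obstacle: for two inputs $v_1,v_2$ one must estimate
\[
\bigl\| v_1|v_1|^{p-2}-v_2|v_2|^{p-2}\bigr\|_2 \le C\bigl(\|\nabla v_1\|_2^{p-2}+\|\nabla v_2\|_2^{p-2}\bigr)\|\nabla(v_1-v_2)\|_2,
\]
using the elementary inequality $\bigl||a|^{p-2}a-|b|^{p-2}b\bigr| \le C(|a|^{p-2}+|b|^{p-2})|a-b|$ followed by H\"older and the critical Sobolev embedding; the exponent constraint $p\le\bar p$ is exactly what is needed. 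Combined with the energy estimate for the linear problem applied to $u_1-u_2$, this gives a Lipschitz constant of the form $C(R)T^{1/2}$ on the ball of radius $R$, which is strictly less than $1$ for $T$ small, and $\Phi$ maps the ball into itself for $T$ small by the same computation with $v_2=0$.

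Finally I would apply the Banach fixed-point theorem to obtain a unique $u\in X_T$ with $\Phi(u)=u$, which is the required solution in $Y_T$. Uniqueness in the full class $Y_T$ (beyond the fixed-point ball) follows from the same difference estimate: subtracting two solutions, testing with $(u_1-u_2)_t$, using the critical-growth Lipschitz bound for the nonlinearity, and closing via Gronwall's inequality gives $u_1\equiv u_2$.
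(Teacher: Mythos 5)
The paper does not prove this theorem itself: it cites \cite[Theorem 3.1]{GS06}, whose proof follows the same architecture you propose (freeze the nonlinearity, solve the linear problem by Faedo--Galerkin, contract). So your overall strategy is the right one. However, there is a genuine gap at the one point where this theorem is actually delicate, namely the role of $\omega$ in the definition of $\bar p$. You assert that $H_0^1(\Omega)\hookrightarrow L^{2(p-1)}(\Omega)$ holds ``precisely under the assumption $2<p\le\bar p$''. That embedding requires $2(p-1)\le \frac{2N}{N-2}$, i.e. $p\le\frac{2N-2}{N-2}$, which is the value of $\bar p$ for $\omega=0$ only. When $\omega>0$ the theorem allows $p$ up to $\frac{2N}{N-2}$, and in the range $\frac{2N-2}{N-2}<p\le\frac{2N}{N-2}$ the source $v|v|^{p-2}$ is in general \emph{not} in $L^2(\Omega)$ but only in $L^{(2^*)'}(\Omega)\subset H^{-1}(\Omega)$. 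Consequently your energy identity cannot be closed by Cauchy--Schwarz in $L^2$, and your contraction estimate
$\bigl\| v_1|v_1|^{p-2}-v_2|v_2|^{p-2}\bigr\|_2 \le C(\|\nabla v_1\|_2^{p-2}+\|\nabla v_2\|_2^{p-2})\|\nabla(v_1-v_2)\|_2$
fails by a H\"older exponent count (it too needs $p\le\frac{2N-2}{N-2}$).

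The missing idea is that for $\omega>0$ the strong damping must be used in the estimates, not just tolerated: one pairs the source with $u^m_t$ in the $H^{-1}$--$H_0^1$ duality, bounds $\int_\Omega v|v|^{p-2}u^m_t\,dx \le \|v|v|^{p-2}\|_{(2^*)'}\,\|u^m_t\|_{2^*} \le C\|\nabla v\|_2^{p-1}\|\nabla u^m_t\|_2$, and absorbs $\|\nabla u^m_t\|_2$ into the dissipation term $\omega\|\nabla u^m_t\|_2^2$ by Young's inequality. The contraction must then be run in a norm that also controls $\|\nabla u_t\|_{L^2(0,T;L^2)}$, with the difference of nonlinearities measured in $L^2(0,T;H^{-1}(\Omega))$ rather than in $L^2(\Omega)$ pointwise in time. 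This is exactly why the admissible exponent is larger when $\omega>0$, and it is the content of the proof in \cite{GS06}. For $\omega=0$, or for $p\le\frac{2N-2}{N-2}$, your argument as written goes through.
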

%%%%%%%%%%%%%%%%%%%%%%%%%%%%%%%%%%%%%%%%%%%
% Defintion Tmax
%%%%%%%%%%%%%%%%%%%%%%%%%%%%%%%%%%%%%%%%%%%
As in the work of Gazzola and Squassina, \cite{GS06}, we define the global solutions and the blow up solutions.
\begin{definition} \label{Tmax}
Let $2<p\leq \bar{p} \,,\, u_{0}\in H_{0}^{1}(\Omega)$ and $u_{1} \in L^{2}(\Omega)$.
We denote $u$ the solution of (\ref{ondes}).
We define:
$$
T_{max} = \sup\Bigl\{ T > 0 \,,\, u = u(t) \; exists \; on \; [0,T]\Bigr\}
$$
Since the solution $u \in Y_T$ (the solution is ``enough regular''), let us recall that if  
$T_{max} < \infty$, then 
$$
 \lim_{\underset {t < T_{max}} {t \rightarrow T_{max}}} \Vert \nabla u \Vert_2  + \Vert u_t \Vert_2 = + \infty \quad .
$$
If $T_{max} < \infty$, we say that the solution of (\ref{ondes}) blows up and that $T_{max}$ is the blow up time.\\
If $T_{max} = \infty$, we say that the solution of (\ref{ondes})  is global.
\end{definition}

In order to study the blow up phenomenon or the global existence of the solution of (\ref{ondes}), we define the following functions:
%%%%%%%%
\begin{eqnarray}
I(u(t)) & =&\Vert \nabla u(t) \Vert_{2}^{2} - \Vert u(t) \Vert_{p}^{p}, \label{Energy_I}\\
J(u(t)) & = &\frac{1}{2} \Vert \nabla u(t) \Vert_{2}^{2}-\frac{1}{p} \Vert u(t) \Vert_{p}^{p}, \label{Energy_J}
\end{eqnarray}
%%%%%%%
and
\begin{equation}\label{Energy_E}
E(u(t)) = J(u(t)) +\frac{1}{2}\Vert u_{t}(t)\Vert_{2}^{2}
\end{equation}
To have a lighter writing of $I, \, J \mbox{ and } E$, we will write : 
$$I(t) = I(u(t)) \,,\, J(t) = J(u(t)) \mbox{ and } E(t) = E(u(t)) \; $$
Let us remark that multiplying (\ref{ondes}) by $u_{t}$, integrating over $\Omega$ and using integration by parts we obtain:
\begin{equation} \label{derivE}
\frac{dE(t) }{dt}=-\omega \Vert \nabla u_{t} \Vert_{2}^{2}- \mu \Vert u_{t} \Vert _{2}^{2 }\; , \;\forall t\geq 0.
\end{equation}
Thus the function $E$ is decreasing along the trajectories.
As in \cite{PS75}, the potential well depth is defined as:
\begin{equation}\label{potentialwell}
d=\inf_{u\in H_{0}^{1}(\Omega)\backslash \{0\}} \max_{\lambda \geq 0}J(\lambda u) .
\end{equation}
We can now define the so called ``Nehari manifold'' as follows:
\begin{equation*}
\mathcal{N}=\left\{ u\in H_{0}^{1}(\Omega) \backslash \{0\} ; \; I(t) =0\right\} .
\end{equation*}
$\mathcal{N}$ separates the two unbounded sets:
\begin{equation*}
\mathcal{N}^{+}= \left\{u\in H_{0}^{1}(\Omega) ;\; I(t) >0 \right\} \cup \{ 0\} \;
\mbox{ and } \;
\mathcal{N}^{-}=\left\{ u \in H_{0}^{1}(\Omega) ;I(t) < 0 \right\} .
\end{equation*}
The \textit{stable} set $\mathcal{W}$ and \textit{unstable} set $\mathcal{U}$ are defined respectively as:
\begin{equation*}
\mathcal{W=}\left\{ u\in H_{0}^{1}(\Omega) ;J(t) \leq d\right\} \cap \mathcal{N}^{+} \;
\mbox{ and } \;
\mathcal{U=}\left\{ u \in H_{0}^{1}(\Omega);J(t) \leq d \right\} \cap \mathcal{N}^{-}.
\end{equation*}

It is readily seen that the potential depth $d$ is also characterized by
\begin{equation*}
d= \min_{u\in \mathcal{N}} J\left( u\right) .
\end{equation*}
As it was remarked by Gazzola and Squassina in \cite{GS06}, this alternative characterization of $d$ shows that
\begin{equation} \label{altd}
\beta = \mbox{dist}(0,\mathcal{N}) =\min_{u\in \mathcal{N}} \Vert \nabla u\Vert_2 = \sqrt{\frac{ 2 d p }{p-2}}  \,  > 0 \quad .
\end{equation}
In the lemma \ref{lemme1}, we would like to prove the invariance of the set $\mathcal{N}^{+}$: if the initial 
data $u_0$ is in the set $\mathcal{N}^{+}$ and if the initial energy $E(0)$ is not large
(we will precise exactly how large may be the initial energy), then $u(t)$ stays in $\mathcal{N}^{+}$ forever. 

For this purpose, as in \cite{GS06,V99}, we denote by $C_{\ast}$ the best constant in the Poincar\'{e}-Sobolev
embedding $H_{0}^{1}(\Omega) \hookrightarrow L^{p}(\Omega)$ defined by:
\begin{equation}\label{sobolev}
C_{\ast}^{-1} = \inf\left\{\Vert \nabla u \Vert_2 : u \in  H_{0}^{1}(\Omega), \Vert u\Vert_p = 1 \right\} \quad .
\end{equation}
Let us remark (as in \cite{GS06,V99}) that if  $p < \bar{p}$ the embedding is compact and 
the infimum in (\ref{sobolev}) (as well as in (\ref{potentialwell})) is attained. 
In such case (see, e.g. \cite[Section 3]{PS75}), any mountain pass solution of the stationary problem is a minimizer for 
(\ref{sobolev}) and $C_{\ast}$ is 
related to its energy:
\begin{equation} \label{mountainpass}
d = \frac{p-2}{2 p}\; C_{\ast}^{-2 p/(p-2)} \quad .
\end{equation}
\begin{remark} 
%%%%%%%NEW
It is well know from the potential well theory, \cite{PS75,E03} \label{stable_unstable}, that
%%%%%%%%% END NEW 
for every solution of (\ref{ondes}), given by Theorem \ref{existence}, only one of the following assumption holds:
\begin {enumerate}
\item[i)] if there exists some $t_{0} \geq 0 \mbox{ such that } u(t_{0}) \in \mathcal{W} \mbox{ and } E(t_{0})<d $, then $ \forall t \geq t_{0} \,,\, u(t) \in \mathcal{W}  \mbox{ and }  E(t)<d$.
\item[ii)]  if there exists some $t_{0} \geq 0 \mbox{ such that } u(t_{0}) \in \mathcal{U}  \mbox{ and } E(t_{0})<d $, then $ \forall t \geq t_{0} \,,\, u(t) \in \mathcal{U}  \mbox{ and }  E(t)<d$.
\item[iii)]  $\forall t \geq 0 \,,\, E(t) \geq d$ \quad .
\end{enumerate}
\end{remark}

We can now proceed in the global existence result investigation. For this sake, let us state two lemmas: 
%%%%%%%%%%%%% NEW%%%%%%%%
these two results are stated in \cite[Proof of Therorem 3.8]{GS06} but are not detailed.
For a better understanding of the results, we give a short proof of these two results.
%%%%%% END NEW %%%%%%%%%%%%%%%
%%%%%%%%%%%%%%%%%%%%%%%%%%%%%%%%%%%%%%%%%%%%%%%%%%%%%%%%%%%%%%%%%%%%%%%%%%%%
% LEMMA 1
%%%%%%%%%%%%%%%%%%%%%%%%%%%%%%%%%%%%%%%%%%%%%%%%%%%%%%%%%%%%%%%%%%%%%%%%%%%%
\begin{lemma}\label{lemme1} Assume $2< p\leq \bar{p}$. Let
$u_{0}\in \mathcal{N}^{+}$ and $u_{1}\in L^{2}(\Omega) $. Moreover, assume that $E(0) < d$.
Then 
%%%%%%%NEW
for any $ 0 < T < T_{max}$,
%%%%%%END NEW
$u(t,.) \in \mathcal{N}^{+}$ for each $t\in [ 0,T).$
\end{lemma}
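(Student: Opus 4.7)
My plan is a standard continuity/invariance argument combining the energy identity (\ref{derivE}), the characterization (\ref{altd}) of $\beta$, and the decomposition of $J$ in terms of $I$.

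First I would record the two preliminary observations that power everything. From (\ref{derivE}) the energy is nonincreasing, so $E(t)\le E(0)<d$ on $[0,T)$. And from (\ref{Energy_I})--(\ref{Energy_J}) one has the identity
\begin{equation*}
J(u(t)) \;=\; \frac{1}{p}I(u(t)) \;+\; \Bigl(\frac{1}{2}-\frac{1}{p}\Bigr)\Vert\nabla u(t)\Vert_2^2,
\end{equation*}
so whenever $I(u(t))\ge 0$ one gets $J(u(t))\ge (\tfrac12-\tfrac1p)\Vert\nabla u(t)\Vert_2^2$, and together with $J(u(t))\le E(t)<d$ this yields the a priori bound $\Vert\nabla u(t)\Vert_2^2<\tfrac{2pd}{p-2}=\beta^2$ on the part of the trajectory still in $\mathcal{N}^+$.

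Next I would run the continuity argument. Set
\begin{equation*}
t^* \;=\; \sup\bigl\{\,t\in[0,T) \,:\, u(s)\in\mathcal{N}^+ \text{ for all } s\in[0,t]\,\bigr\}.
\end{equation*}
Since $u_0\in\mathcal{N}^+$ and $u\in C([0,T],H_0^1(\Omega))$, Theorem \ref{existence} makes $t\mapsto\Vert\nabla u(t)\Vert_2$ and (by the Sobolev embedding $H_0^1\hookrightarrow L^p$ valid for $p\le\bar p$) also $t\mapsto\Vert u(t)\Vert_p$ continuous, hence $t\mapsto I(u(t))$ is continuous. The plan is to rule out $t^*<T$ by contradiction. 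If $t^*<T$, then by continuity $I(u(t^*))\ge 0$, so $u(t^*)\in\overline{\mathcal N^+}$. The only way $u(t^*)\notin\mathcal N^+$ is to have $I(u(t^*))=0$ together with $u(t^*)\ne 0$, i.e.\ $u(t^*)\in\mathcal N$.

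Finally I would close the loop with (\ref{altd}). Membership $u(t^*)\in\mathcal{N}$ gives $J(u(t^*))\ge d$ by the characterization $d=\min_{\mathcal N}J$, and on the other hand $J(u(t^*))\le E(t^*)\le E(0)<d$, a contradiction; therefore $t^*=T$. The only real subtlety I anticipate is the case $u(t^*)=0$, which I handle by noting that $0\in\mathcal{N}^+$ by definition, so the trajectory is still in $\mathcal{N}^+$ at $t^*$ and by continuity of $I$ one can extend slightly past $t^*$ (alternatively, observing that if $u(t^*)=0$ then trivially $I(u(t^*))=0$ does not force leaving $\mathcal{N}^+$), contradicting the definition of $t^*$. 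Everything else is bookkeeping.
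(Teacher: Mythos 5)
Your argument is correct in its main line, but it is a genuinely different route from the paper's. The paper works quantitatively: on the maximal subinterval where $I\ge 0$ it derives $\Vert \nabla u\Vert_2^2\le \frac{2p}{p-2}E(0)$, then uses the Sobolev constant $C_{\ast}$ and the reformulation (\ref{initial}) of $E(0)<d$ to get the strict inequality $\Vert u\Vert_p^p\le C_{\ast}^p\bigl(\tfrac{2p}{p-2}E(0)\bigr)^{\frac{p-2}{2}}\Vert\nabla u\Vert_2^2<\Vert\nabla u\Vert_2^2$, i.e.\ $I>0$, and iterates. You instead run a barrier argument off the variational characterization $d=\min_{\mathcal N}J$: the trajectory cannot touch $\mathcal N$ because there $J\ge d$ while $J\le E\le E(0)<d$ along the orbit. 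Your version is the cleaner, more classical Payne--Sattinger invariance argument and avoids the slightly awkward ``repeat the procedure'' step at the end of the paper's proof. What the paper's route buys, and yours does not, is the quantitative byproduct: the inequality $\Vert u\Vert_p^p\le C_{\ast}^p\bigl(\tfrac{2p}{p-2}E(0)\bigr)^{\frac{p-2}{2}}\Vert\nabla u\Vert_2^2$ with coefficient strictly below $1$ is exactly what is fed into (\ref{estimdLdt1}) to close the Lyapunov estimate in Theorem \ref{exponential}, so the paper's computation is not redundant even if you adopt your proof of the lemma.

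One soft spot: your treatment of the case $u(t^*)=0$ does not work as stated. If $I(u(t^*))=0$ because $u(t^*)=0$, continuity of $t\mapsto I(u(t))$ only tells you $I$ is near $0$ just after $t^*$, which is compatible with $I<0$ there, so you cannot ``extend slightly past $t^*$'' by continuity of $I$ alone. The correct patch is already in your toolbox: by (\ref{altd}) and (\ref{mountainpass}), $I(v)<0$ forces $\Vert\nabla v\Vert_2>\beta=C_{\ast}^{-p/(p-2)}$, so the sublevel set $\{\Vert\nabla v\Vert_2<\beta\}$ is entirely contained in $\mathcal N^{+}$; since $\Vert\nabla u(t^*)\Vert_2=0<\beta$ and $u\in C^{0}([0,T],H_0^1(\Omega))$, the trajectory stays in that sublevel set, hence in $\mathcal N^{+}$, on a neighborhood of $t^*$. (This is a genuine edge case only if one admits $u_0=0$; the paper sidesteps it by tacitly assuming $I(u_0)>0$.) With that repair your proof is complete.
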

\begin{remark}   Let us remark, that if there exists $t_0 \in [0,T)$ such that 
$$E(t_0) < d$$
the  same result stays true. It is the reason why we choose $t_0 = 0$.

Moreover , one can easily see that, from (\ref{mountainpass}), the condition $E(0) < d$ is equivalent to the inequality:
\begin{equation} \label{initial}
C_{\ast}^{p}\left(\frac{2p}{p-2} E(0) \right)^{\frac{p-2}{2}} < 1
\end{equation}
This last inequality will be used in the remaining proofs.
\end{remark}
%%%%%%%%%%%%%%%%%%%%%%%%%%%%%%%%%%%%%%%%%%%%%%%%%%%%%%%%%%%%%%%%%%%%%%%%%%%%
% PROOF OF LEMMA 1
%%%%%%%%%%%%%%%%%%%%%%%%%%%%%%%%%%%%%%%%%%%%%%%%%%%%%%%%%%%%%%%%%%%%%%%%%%%%
\begin{proof}
Since $I(u_{0}) >0$,  then by continuity, there exists $T_{\ast}\leq T$ such that 
$I(u(t,.)) \geq 0,$ for all $t\in [0,T_{\ast })$. Since we have the relation:
\begin{equation*}
J(t)   =  \frac{p-2}{2p} \Vert \nabla u\Vert_{2}^{2}+ \frac{1}{p} I(t) \nonumber\\
\end{equation*}
we easily obtain :
\begin{equation*}
J(t) \geq \frac{p-2}{2p} \Vert \nabla u\Vert_{2}^{2}, \quad \forall t\in [0,T_{\ast }) \quad . 
\end{equation*}
Hence we have:
\begin{equation*}
\Vert \nabla u \Vert_{2}^{2} \leq  \frac{2p}{p-2}J(t) \quad .
\end{equation*}
From (\ref{Energy_J}) and (\ref{Energy_E}), we obvioulsy have $\forall t \in [0,T_{\ast}), J(t) \leq E(t)$. Thus we obtain:
\begin{equation*}
\Vert \nabla u \Vert_{2}^{2} \leq  \frac{2p}{p-2}E(t) 
\end{equation*}
Since $E$ is a decreasing function of $t$, we finally have:
\begin{equation} \label{ineqE0}
\Vert \nabla u \Vert_{2}^{2} \leq \frac{2p}{p-2}E(0) ,\,\forall t\in [ 0,T_{\ast }) \;. 
\end{equation}
By definition of $C_{\ast}$, we have:
\begin{equation*}
\Vert u \Vert_{p}^{p} \leq  C_{\ast }^{p} \Vert \nabla u \Vert_{2}^{p} = 
                              C_{\ast }^{p} \Vert \nabla u \Vert_{2}^{p-2}\Vert \nabla u \Vert_{2}^{2}
\end{equation*}
Using the inequality (\ref{ineqE0}), we deduce:
\begin{equation*}
\Vert u \Vert_{p}^{p} \leq C_{\ast }^{p} \left(\frac{2p}{p-2}E(0)\right)^{\frac{p-2}{2}} \Vert \nabla u\Vert_{2}^{2} \quad .
\end{equation*}
Now exploiting the inequality on the initial condition (\ref{initial}) we obtain:
\begin{equation} \label{upp_gradu}
\Vert u \Vert_{p}^{p} < \Vert \nabla u\Vert_{2}^{2} \quad .
\end{equation}
Hence $\Vert \nabla u \Vert_{2}^{2} - \Vert u \Vert_{p}^{p}>0, \; \forall t  \in [ 0,T_{\ast })$, this shows that 
$u(t,.) \in \mathcal{N}^{+}, \; \forall t\in [ 0,T_{\ast})$. 
By repeating this procedure, $T_{\ast }$ is extended to $T$.
\end{proof}
%%%%%%%%%%%%%%%%%%%%%%%%%%%%%%%%%%%%%%%%%%%%%%%%%%%%%%%%%%%%%%%%%%%%%%%%%%%%
% LEMMA 2
%%%%%%%%%%%%%%%%%%%%%%%%%%%%%%%%%%%%%%%%%%%%%%%%%%%%%%%%%%%%%%%%%%%%%%%%%%%%
\begin{lemma}\label{lemme2}Assume $2< p\leq \bar{p}$. Let $u_{0}\in \mathcal{N}^{+}$ and $u_{1}\in L^{2}(\Omega) $. 
Moreover, assume that $E(0) < d$. 
Then the solution of the problem (\ref{ondes}) is global in time.
\end{lemma}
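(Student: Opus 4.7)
The plan is to combine the invariance statement from Lemma \ref{lemme1} with the energy identity and the blow-up alternative recalled in Definition \ref{Tmax}. Since it suffices to produce an a priori bound on $\Vert \nabla u(t)\Vert_2 + \Vert u_t(t)\Vert_2$ on $[0,T_{max})$, the whole argument reduces to showing that this quantity stays finite as long as the solution exists.

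First I would fix an arbitrary $T < T_{max}$ and apply Lemma \ref{lemme1}, which gives $u(t) \in \mathcal{N}^+$ for every $t \in [0,T)$; in particular $I(t) \geq 0$ on this interval. Next, using the algebraic identity
\begin{equation*}
J(t) = \frac{p-2}{2p}\,\Vert \nabla u(t)\Vert_2^{\,2} + \frac{1}{p}\,I(t),
\end{equation*}
together with the definition (\ref{Energy_E}) of $E(t)$, I would write
\begin{equation*}
E(t) = \frac{p-2}{2p}\,\Vert \nabla u(t)\Vert_2^{\,2} + \frac{1}{p}\,I(t) + \frac{1}{2}\,\Vert u_t(t)\Vert_2^{\,2}.
\end{equation*}

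Because $I(t) \geq 0$ on $[0,T)$ and, by (\ref{derivE}), $E$ is nonincreasing, I would deduce
\begin{equation*}
\frac{p-2}{2p}\,\Vert \nabla u(t)\Vert_2^{\,2} + \frac{1}{2}\,\Vert u_t(t)\Vert_2^{\,2} \leq E(t) \leq E(0) < d,
\end{equation*}
so both $\Vert \nabla u(t)\Vert_2$ and $\Vert u_t(t)\Vert_2$ are bounded uniformly on $[0,T)$ by a constant depending only on $E(0)$ and $p$.

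Finally, I would conclude by contradiction: if $T_{max} < \infty$, Definition \ref{Tmax} forces $\Vert \nabla u(t)\Vert_2 + \Vert u_t(t)\Vert_2 \to \infty$ as $t \to T_{max}^{-}$, contradicting the uniform bound just obtained. Hence $T_{max} = \infty$ and the solution is global. I do not anticipate a real obstacle here; the whole point is that the invariance of $\mathcal{N}^+$ established in Lemma \ref{lemme1} converts the energy estimate into a genuine a priori bound, and the blow-up alternative does the rest.
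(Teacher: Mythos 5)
Your proposal is correct and follows essentially the same route as the paper: the invariance of $\mathcal{N}^{+}$ from Lemma \ref{lemme1} gives $I(t)\geq 0$, the decomposition $E(t)=\frac{p-2}{2p}\Vert\nabla u\Vert_2^2+\frac{1}{p}I(t)+\frac{1}{2}\Vert u_t\Vert_2^2$ together with the monotonicity of $E$ yields the uniform bound, and the blow-up alternative of Definition \ref{Tmax} concludes. The only difference is cosmetic: you make explicit the appeal to Lemma \ref{lemme1} and the final contradiction argument, which the paper leaves implicit.
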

%%%%%%%%%%%%%%%%%%%%%%%%%%%%%%%%%%%%%%%%%%%%%%%%%%%%%%%%%%%%%%%%%%%%%%%%%%%%
% PROOF OF LEMMA 2
%%%%%%%%%%%%%%%%%%%%%%%%%%%%%%%%%%%%%%%%%%%%%%%%%%%%%%%%%%%%%%%%%%%%%%%%%%%%
\begin{proof}
Since the map $t \mapsto E(t)$ is a decreasing function of time $t$, we have:
$$
E(0)  \geq E(t) =\frac{1}{2}\Vert u_{t} \Vert_{2}^{2}+ \frac{(p-2) }{2p}\Vert \nabla u \Vert_{2}^{2}+\frac{1}{p}I(t) \quad ,
$$
which gives us:
$$ 
E(0) \geq \frac{1}{2}\Vert u_{t}\Vert_{2}^{2}+ \frac{(p-2) }{2p} \Vert \nabla u \Vert_{2}^{2} \quad .
$$
Thus, $\forall t \in [0,T)\,,\, \Vert \nabla u \Vert_2  + \Vert u_t \Vert_2$ is uniformely bounded by a constant depending
only on $E(0)$ and $p$. Then by definition \ref{Tmax}, the solution is global, so $T_{max} = \infty$.
\end{proof}
We can now state the asymptotic behavior of the solution of (\ref{ondes}).
%%%%%%%%%%%%%%%%%%%%%%%%%%%%%%%%%%%%%%%%%%%%%%%%%%%%%%%%%%%%%%%%%%%%%%%%%%%%
% THEOREM : EXPONENTIAL DECAY
%%%%%%%%%%%%%%%%%%%%%%%%%%%%%%%%%%%%%%%%%%%%%%%%%%%%%%%%%%%%%%%%%%%%%%%%%%%%
\begin{theorem} \label{exponential} Assume $2< p\leq \bar{p}$. Let
$u_{0}\in \mathcal{N}^{+}$ and $u_{1}\in L^{2}(\Omega) $. Moreover, assume that $E(0) < d$.  
Then there exist two positive constants $\widehat{C}$ and $\xi $ independent of  $t$ such that:
\begin{eqnarray*}
0 < E(t) \leq \widehat{C}e^{-\xi t},\ \forall \, t\geq 0.
\end{eqnarray*}
\end{theorem}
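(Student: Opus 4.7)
The plan is to construct a Lyapunov functional $L(t)$ equivalent to $E(t)$ and satisfying a differential inequality of the form $L'(t) \leq -\xi L(t)$, from which exponential decay will follow by Gronwall together with the equivalence $L\sim E$. The construction follows the Haraux--Zuazua strategy alluded to in the introduction.

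First, I would set
$$
L(t) = E(t) + \varepsilon \Psi(t), \qquad \Psi(t) = \int_\Omega u u_t \, dx + \frac{\omega}{2} \Vert \nabla u \Vert_{2}^{2},
$$
with $\varepsilon > 0$ to be fixed later. The term $\frac{\omega}{2}\Vert\nabla u\Vert_{2}^{2}$ is included precisely to cancel the cross term $\omega\int_\Omega \nabla u\cdot\nabla u_t\, dx$ that appears when one differentiates $\int_\Omega u u_t\, dx$ and substitutes the equation (\ref{ondes}). By Young's and Poincar\'e's inequalities, using the uniform bound on $\Vert\nabla u\Vert_{2}$ and $\Vert u_t\Vert_2$ established in the proof of Lemma \ref{lemme2}, one gets $|L(t)-E(t)|\leq C\varepsilon E(t)$, so that $\alpha_1 E(t)\leq L(t)\leq \alpha_2 E(t)$ provided $\varepsilon$ is small enough.

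Second, I would compute $L'(t)$. Testing (\ref{ondes}) against $u$ yields the identity
$$
\Psi'(t) = \Vert u_t\Vert_{2}^{2} - I(t) - \mu\int_\Omega u u_t\, dx,
$$
so combining with (\ref{derivE}),
$$
L'(t) = -\omega \Vert \nabla u_t \Vert_{2}^{2} - (\mu-\varepsilon)\Vert u_t\Vert_{2}^{2} - \varepsilon I(t) - \varepsilon\mu\int_\Omega u u_t\, dx.
$$
The crucial coercivity input comes from Lemma \ref{lemme1}: since $u(t)\in\mathcal{N}^{+}$ for all $t\geq 0$, the argument in its proof (via (\ref{initial}) and (\ref{upp_gradu})) gives $\Vert u\Vert_{p}^{p}\leq \eta \Vert\nabla u\Vert_{2}^{2}$ with $\eta:=C_{\ast}^{p}\bigl(\tfrac{2p}{p-2}E(0)\bigr)^{(p-2)/2}<1$, and hence $I(t)\geq (1-\eta)\Vert\nabla u\Vert_{2}^{2}$. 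Bounding the last cross term by Young plus Poincar\'e and choosing the parameters carefully, I obtain
$$
L'(t)\leq -c_1\Vert u_t\Vert_{2}^{2} - c_2\Vert\nabla u\Vert_{2}^{2}
$$
for some $c_1,c_2>0$.

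Finally, from $\Vert u\Vert_{p}^{p}\geq 0$ one has $E(t)\leq \tfrac{1}{2}\Vert u_t\Vert_{2}^{2} + \tfrac{1}{2}\Vert\nabla u\Vert_{2}^{2}$, so the dissipation dominates the energy and $L'(t)\leq -c E(t)\leq -\xi L(t)$ by the equivalence. Integration yields $L(t)\leq L(0)e^{-\xi t}$ and hence $E(t)\leq \widehat C e^{-\xi t}$ with $\widehat C=\alpha_2 E(0)/\alpha_1$. The main obstacle I anticipate is the bookkeeping of constants: one must first pick the Young parameter $\delta$ small enough that the coefficient of $\Vert\nabla u\Vert_{2}^{2}$ stays strictly positive after Poincar\'e, and only then fix $\varepsilon$ small enough to preserve both the positivity of the $\Vert u_t\Vert_{2}^{2}$ coefficient and the equivalence $L\sim E$. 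The argument is uniform in $\omega\geq 0$, since all $\omega$-dependent terms are dissipative and harmlessly vanish in the case $\omega=0$.
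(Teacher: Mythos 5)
Your proposal is correct and follows essentially the same route as the paper: the identical Lyapunov functional $L(t)=E(t)+\varepsilon\int_\Omega u u_t\,dx+\frac{\varepsilon\omega}{2}\Vert\nabla u\Vert_2^2$, the same cancellation of the $\omega$ cross term, the same use of (\ref{initial}) to make the coefficient of $\Vert\nabla u\Vert_2^2$ strictly negative, and the same order of parameter choices ($\delta$ first, then $\varepsilon$). The only cosmetic difference is that you close the argument by bounding $E(t)\leq\frac12\Vert u_t\Vert_2^2+\frac12\Vert\nabla u\Vert_2^2$ directly, whereas the paper inserts an auxiliary constant $M$; both yield the same differential inequality (\ref{diffineq}).
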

%%%%%%%%%%%%%%%%%%%%%%%%%%%%%%%%%%%%%%%%%%%%%%%%%%%%%%%%%%%%%%%%%%%%%%%%%%%%
% Remark
%%%%%%%%%%%%%%%%%%%%%%%%%%%%%%%%%%%%%%%%%%%%%%%%%%%%%%%%%%%%%%%%%%%%%%%%%%%%
\begin{remark} 
Let us remark that these inequalities imply that there exist positive constants $K$ and $\zeta$ independent of  $t$ such that:
\begin{eqnarray*}
\Vert \nabla u(t)\Vert_2^2 + \Vert u_t(t) \Vert_2^2 \leq K e^{-\zeta t},\ \forall \, t\geq 0.
\end{eqnarray*}
Thus, this result improves the decay rate of Gazzola and Squassina \cite[Theorem 3.8]{GS06}, in which the authors 
showed only the polynomial decay. Here we show that we can always find initial data satisfying $u_{0}\in \mathcal{N}^{+}$
and $u_{1}\in L^{2}(\Omega) $ which verify the inequality (\ref{initial}), such that the solution can decay
faster than $1/t$, in fact with an exponential rate. 
Also, the same situation happens in absence of strong damping ($\omega=0$).
\end{remark}
%%%%%%%%%%%%%%%%%%%%%%%%%%%%%%%%%%%%%%%%%%%%%%%%%%%%%%%%%%%%%%%%%%%%%%%%%%%%
% PROOF OF THEOREM
%%%%%%%%%%%%%%%%%%%%%%%%%%%%%%%%%%%%%%%%%%%%%%%%%%%%%%%%%%%%%%%%%%%%%%%%%%%%
\begin{proof}
Since we have proved that $\forall t \geq 0 \,,\, u(t)\in \mathcal{N}^{+}$, we already have:
\begin{eqnarray*}
0 < E(t) \quad \forall \, t\geq 0.
\end{eqnarray*}
The proof of the other inequality relies on the construction of a Lyapunov functional by performing a suitable 
modification of  the energy. To this end, for $\varepsilon >0$, to be chosen later, we define
\begin{equation}\label{energy_L}
L(t) = E(t) +\varepsilon \int_{\Omega}u_{t} u dx + \frac{\varepsilon \omega }{2} \Vert \nabla u\Vert_{2}^{2} \quad .
\end{equation}%
It is straightforward to see that $L(t) $ and $E(t) $ are equivalent in the sense that there exist two positive constants
$\beta_{1}$ and $\beta_{2}>0$ depending on $\varepsilon$ such that for $t\geq 0$%
\begin{equation} \label{equivLE}
\beta_{1}E(t) \leq L(t) \leq \beta_{2}E(t) .
\end{equation}
By taking the time derivative of the function $L$ defined above in equation (\ref{energy_L}), using problem (\ref{ondes}),
and performing several integration by parts, we get:
\begin{eqnarray}
\frac{dL(t)} {dt} & = &-\omega \Vert \nabla u_{t}\Vert_{2}^{2} - \mu \Vert u_{t}\Vert_{2}^{2}
+\varepsilon \Vert u_{t} \Vert_{2}^{2}-\varepsilon \Vert \nabla u \Vert_{2}^{2}
\nonumber \\
&& + \varepsilon \Vert u \Vert_{p}^{p} - \varepsilon \mu \int_{\Omega}u_{t} \, u dx \label{dLdt} \quad .
\end{eqnarray}
Now, we estimate the last term in the right hand side of (\ref{dLdt}) as follows.\\
By using Young's inequality, we obtain, for any $\delta>0$
\begin{equation}\label{int_(u_tu)}
\int_{\Omega } u_{t} u dx \leq \frac{1}{4\delta } \Vert u_{t} \Vert_{2}^{2}+ \delta \Vert u \Vert _{2}^{2} \quad .
\end{equation}
Consequently, inserting (\ref{int_(u_tu)}) into (\ref{dLdt}) and using inequality (\ref{upp_gradu}), we have:
\begin{eqnarray}
\frac{dL(t)}{dt} & \leq & -\omega \Vert \nabla u_{t}\Vert_{2}^{2}+ 
\left( \varepsilon \left(\frac{\mu}{4\delta } +1 \right ) - \mu \right) \Vert u_{t} \Vert_{2}^{2}  \nonumber \\
&& + \varepsilon \left( \mu C_{\ast}^{2} \delta + 
\underset{<0} {\underbrace{C_{\ast}^{p}\left( \frac{2p}{(p-2) }E(0) \right)^{\frac{p-2}{2}}-1}}\right) 
\Vert \nabla u \Vert_{2}^{2} \quad . \label{estimdLdt1}
\end{eqnarray}
From (\ref{initial}), we have 
$$
\displaystyle C_{\ast }^{p}\left( \frac{2p} {\left(p-2\right)}E(0) \right)^{\frac{p-2}{2}}-1 < 0 \quad .
$$
Now, let us choose $\delta$ small enough such that:
$$
\displaystyle  \mu C_{\ast}^{2} \delta + 
C_{\ast }^{p}\left( \frac{2p} {\left(p-2\right)}E(0) \right)^{\frac{p-2}{2}}-1 < 0 \quad .
$$
From (\ref{estimdLdt1}), we may find $\eta > 0 $, which depends only on $\delta$, such that:
\begin{equation*}
\frac{dL(t)}{dt} \leq -\omega \Vert \nabla u_{t} \Vert_{2}^{2}+
\left( \varepsilon \left( \frac{\mu}{4 \delta} + 1\right) - \mu \right) \Vert u_{t}\Vert_{2}^{2}
-\varepsilon \eta \Vert \nabla u \Vert_{2}^{2} 
\end{equation*}
Consequently, using the definition of the energy (\ref{Energy_E}), for any positive constant $M$, we obtain:
\begin{eqnarray}
\frac{dL(t)}{dt} & \leq & - M \varepsilon E(t) +
\left(\varepsilon \left( \frac{\mu}{4 \delta} + 1 + \frac{M}{2}\right)  -\mu \right) \Vert u_{t} \Vert_{2}^{2} - 
\omega \Vert \nabla u_{t}\Vert_{2}^{2}  \nonumber \\
&& + \varepsilon \left( \frac{M}{2}- \eta \right) \Vert \nabla u \Vert_{2}^{2} \label{estimdLdt2} \quad .
\end{eqnarray}
Now, choosing $M \leq 2\eta$, and $\varepsilon $ small enough such that
$$
\left(\varepsilon \left( \frac{\mu}{4 \delta} + 1 + \frac{M}{2}\right)  -\mu \right) < 0 \quad,
$$
inequality (\ref{estimdLdt2}) becomes: 
\begin{equation*}
\frac{dL(t) }{dt}\leq -M\varepsilon E(t) ,\ \forall t\geq 0.
\end{equation*}%
On the other hand, by virtue of (\ref{equivLE}), setting $\xi =-M\varepsilon /\beta_{2}$, the last inequality becomes:
\begin{equation}\label{diffineq}
\frac{dL(t)}{dt} \leq -\xi L(t) \;,\quad \forall t\geq0 \quad .
\end{equation}
Integrating the previous differential inequality (\ref{diffineq}) between $0$ and $t$ gives the following estimate for the
function $L$:
\begin{equation*}
L(t) \leq Ce^{-\xi t} \;,\quad \forall t\geq0 \quad .
\end{equation*}%
Consequently, by using  (\ref{equivLE}) once again, we conclude 
\begin{equation*}
E(t) \leq \widehat{C} e^{-\xi t} \;,\quad \forall t\geq0 \quad .
\end{equation*}
This completes the proof.
\end{proof}
\begin{remark}\label{omega}
Note that we can obtain the same results as in  Theorem \ref{exponential} in the case $\omega =0$, 
by taking the following Lyapunov function
\begin{equation*}
L(t) =E(t) + \varepsilon \int_{\Omega }u_{t}udx.
\end{equation*} 
\end{remark}
\begin{remark} It is clear that the following problem:
\begin{equation*}
\left\{
\begin{array}{ll}
u_{tt} - \mbox{\rm div} \left( \displaystyle \frac{\nabla u}{\sqrt{1 + \vert \nabla u \vert^{2}}} \right)
- \omega\Delta{u}_{t}+\mu{u}_{t}=u \vert u \vert^{p-2}&  x \in \Omega ,\ t>0 \\[0.1cm]
u(x,t) =0, &  x\in \partial \Omega,\ t>0  \\[0.1cm]
u(x,0) =u_{0}(x), \; u_{t}(x,0) = u_{1}(x) & x \in \Omega \quad .
\end{array}
\right.
\end{equation*}
could be treated with the same method and we obtained also an exponential decay of the solution if the initial condition
is in the positive Nehari space and its energy is lower that the potential well depth.
\end{remark}
\section*{Acknowledgments} 
The second author was supported by MIRA 2007 project of the R\'{e}gion Rh\^{o}ne-Alpes. This author wishes to thank Univ. de Savoie of Chamb\'{e}ry
for its kind hospitality.
Moreover, the two authors wish to thank the referee for his useful remarks and his careful reading of the proofs presented in this paper.
%%%%%%%%%%%%%%%%%%%%%%%%%%%%%%%%%%%%
\bibliographystyle{plain}

\end{document}